\newcommand{\nc}{\newcommand}
\nc{\dmo}{\DeclareMathOperator}
\nc{\Q}{\mathbb Q}
\nc{\Z}{\mathbb Z}
\nc{\Qp}{\Q_p}
\nc{\Zp}{\Z_p}
\nc{\C}{{\mathbb C}}
\nc{\R}{{\mathbb R}}
\nc{\bbP}{{\mathbb P}}
\nc{\F}{{\mathbb F}}
\dmo{\Gal}{Gal}
\dmo{\PSL}{PSL}
\dmo{\PGL}{PGL}
\dmo{\GL}{GL}
\dmo{\SL}{SL}
\nc{\Fr}{\textrm{Fr}}
\nc{\g}{\PSL_2(7)}
\nc{\gseptic}{\GL_3(2)}
\nc{\rd}{\mathrm{rd}}
\nc{\unr}{\text{unr}}
\nc{\cots}{,\ldots, }
\nc{\pots}{+\cdots+ }
\nc{\sd}{{:}}
\nc{\z}{{,}}
\nc{\Se}{\S}
\nc{\into}{\hookrightarrow}
\nc{\frakd}{\mathfrak{d}}
\nc{\gaffine}{2^3\sd\gseptic}
\newtheorem{thm}{Theorem}
\newtheorem{cor}{Corollary}
\newtheorem{lemma}{Lemma}
\title{Mixed degree number field computations}
\author{John W.\ Jones}
\address{School of Mathematical and Statistical Sciences, Arizona
  State University, PO Box 871804, Tempe, AZ 85287} \email{jj@asu.edu}
\author{David P.\ Roberts}
\address{Division of Science and Mathematics, University of
  Minnesota-Morris, Morris, MN 56267}
\email{roberts@morris.umn.edu}
\thanks{Roberts was supported by grant \#209472 from the Simons foundation.}
\keywords{number field; discriminant}
\subjclass[2010]{Primary 11R21; Secondary 11Y40, 11R32}
\begin{document}
\begin{abstract}
  We present a method for computing complete lists of number fields in
  cases where the Galois group, as an abstract group, appears as a
  Galois group in smaller degree.  We apply this method to find the twenty-five 
  octic fields with Galois group $\g$ and smallest absolute discriminant. 
  We carry out a number of related computations, including determining
  the octic field with Galois group $\gaffine$ of smallest absolute discriminant. 
\end{abstract}
\maketitle

\section{Introduction}
\subsection{Overview}
Number theorists have computed number fields with minimal absolute
discriminants for each of the thirty possible Galois groups in degrees
at most $7$.  In degrees $8$ and $9$, the minimal fields are known for
the seventy-five solvable Galois groups.  All these minimal fields are
available, together with references to sources, on the Kl\"uners-Malle database
 \cite{kluner-malle}.  The minimal fields are also available, typically as first elements on long
complete lists, at our database \cite{jr-global-database}.

The Kl\"uners-Malle paper \cite{kluner-malle} also gives smallest known absolute discriminants 
for the five nonsolvable octic groups and the four nonsolvable nonic groups.  
Despite the fifteen years that have passed since its publication, rigorous
minima have not been established for these nine groups.  In this paper,
we address two of the nine cases, proving that the absolute discriminants $3^8 7^8$ and $5717^2$ 
presented in \cite{kluner-malle} for 
the octic groups $\g$ and $\gaffine$ are indeed minimal.  
These cases are related through the exceptional isomorphism 
$\g \cong \gseptic$.   

One element of our approach for finding the $\g$ minimum was suggested
already in \cite{kluner-malle}:  any octic $\g$ 
field $K_8$ has the same Galois closure as two septic 
$\gseptic$ fields $K_{7a}$ and $K_{7b}$.  As the discriminants
satisfy $D_{7a}=D_{7b} \mid D_8$, one 
can in principle establish minimality of the octic discriminant $21^8$ by 
conducting a search of all septic fields with absolute 
discriminant $\leq 21^8$. 
We combine this with the method of targeted Hunter searches, which
requires us to analyze, on a prime-by-prime basis, how discriminants
either stay the same or increase when one passes from septic to octic
fields.  This targeting based on discriminants  makes the computation
feasible.  We add several smaller refinements to make the computation
run even faster. 

Our title refers to the general method of carrying
out a carefully targeted search in one degree to  
obtain a complete list of fields
in a larger degree.  Section~\ref{background}
gives background and then Section~\ref{mixed} 
describes the general method,  using our
case where the two degrees are $7$ and $8$ as an illustration.  
Section~\ref{summary} presents our minimality result for $\g$, improved
in Theorem~\ref{octicthm1} to the complete list of twenty-five octic $\g$
fields with discriminant $\leq 30^8$.  
This section also presents corollaries giving minimal 
absolute discriminants for certain related groups 
in degrees $16$, $24$, and $32$.   

Section~\ref{pgl2}
gives a second illustration of the mixed degree method,
now with degrees $5$ and $6$.
Here we use the exceptional isomorphisms
$A_5 \cong \PSL_2(5)$ and $S_5 \cong \PGL_2(5)$
and Theorem~\ref{sexticthm} considerably extends the known list of 
sextic $\PSL_2(5)$ and $\PGL_2(5)$ fields.   
We also explain in this section potential connections
with asymptotic mass formulas and Artin representations.

Our final section returns to groups related to 
the septic group $\gseptic$.    Theorem~\ref{septicthm} 
finds all alternating septics with discriminant
$\leq 12^7$.   The long runtime of this
search makes clear the importance of targeting
for Theorem~\ref{octicthm1}.  
However just the bound $12^7$ is
sufficient for our last corollary, 
which confirms minimality
of the 
 $\gaffine$ field with
discriminant $5717^2$.

\subsection{Notation and conventions}
\label{notation}
We denote the cyclic group of order $n$ by $C_n$.  We use $N\sd H$ to
denote a semi-direct product with normal subgroup $N$ and complement $H$.

A number field is a finite extension of $\Q$, which we
consider up to isomorphism.  If $K/\Q$ is such an extension with
degree $n$, then its normal closure, $K^g$, is Galois over $\Q$.
Moreover, $\Gal(K^g/\Q)$ comes with a natural embedding into $S_n$,
which is well-defined up to conjugation.  We denote the image of such
an embedding, which is
a transitive subgroup of $S_n$,
by simply $\Gal(K)$.

Each possibility for $\Gal(K)$ has a standard notation
$nTj$ introduced in \cite{conway-hulpke-mckay}.
In \Se\ref{classnumbers} we use the classification of
nearly $3$ million transitive subgroups of $S_{32}$ which was
completed more recently in \cite{deg32}.

When several non-isomorphic fields have the same
splitting field, we refer to them as siblings. For
example, fields $K_{7a}$, $K_{7b}$, and $K_{8}$ as in the 
overview are siblings.  When choosing
notations for groups, we choose the name which
reflects the group's natural transitive action.
For example, $\gseptic=7T5$ and $\g=8T37$ act on the
projective spaces $\bbP^2(\F_2)$ 
and $\bbP^1(\F_7)$ of orders $7$ and $8$ respectively.   The third 
group mentioned in the overview is the
group of affine transformations of $\F_2^3$. 
Our notation emphasizes its semidirect product structure: $\gaffine = 8T48$.  

It is often
enlightening to shift the focus from an absolute discriminant
$|D|$ 
of a degree $n$ number field $K$
to the corresponding root discriminant 
$\rd(K)= \delta  = |D|^{1/n}$.     We generally try to indicate
both, as in the numbers $21^8$ and $30^8$ of the overview.  

\section{Background}
\label{background}
Our method of mixed degree targeted Hunter searches is built on well-established
methods of searching for number fields, which we now
briefly explain.  
\subsection{Hunter searches}
\label{hunter}
A Hunter search is a standard technique for computing all primitive number
fields of a given degree with absolute discriminant less than a given
bound \cite{cohen2}.   Here a number field $K$ is primitive if it has
exactly two subfields, itself and $\Q$.   When the degree $n$ is 
prime, as in our cases $n=7$ and $n=5$ here, the primitivity
assumption is vacuous. 

The only two inputs for a standard Hunter search are the
degree and the discriminant bound.  Some implementations optimize
for a particular signature which can then be thought of as a third
input, but here we search all signatures simultaneously.
The computation itself is an exhaustive search for polynomials with
integer coefficients bounded by various inequalities.

\subsection{Targeted Hunter searches}
Targeting, introduced in \cite{jr1}, and refined in
\cite{JR-septic}, allows one to search for fields with particular
large discriminants.  One carries out a Hunter search, but only for
fields which match a given combination of local targets.  The targets, 
described below,
determine both the discriminant and the local behavior of the field at
ramifying primes $p$.  This latter information forces a defining
polynomial to satisfy congruences modulo several prime powers, 
and these congruences greatly reduce the number of polynomials one
needs to inspect.

To describe the targets more precisely, let $p$ be a prime number and let 
$K$ be a degree $n$ number field.  Then $K\otimes\Q_p\cong \prod_{i=1}^g K_{p,i}$
where each $K_{p,i}$ is a finite extension of $\Q_p$.  At its most
refined level, a local target may be a single $p$-adic algebra, up to
isomorphism.  In a few situations, we do work at this level.
However typically, one wants to treat natural collections of
$p$-adic algebras as a single target.

Let $\Q_p^\unr$ be the unramified closure of $\Q_p$.  Then, similar to
the decomposition above we have
\begin{equation} \label{localdecomp}
K\otimes \Q_p^\unr\cong\prod_{j=1}^t L_{p,j},
\end{equation}
where each $L_{p,j}$ is a finite extension of $\Q_p^\unr$.  Let $e_j$
be the ramification index of the field $L_{p,j}$, and $(p)^{c_j}$ its
discriminant ideal.  We note that the $e_j$ give the sizes of the
orbits of the $p$-inertia subgroup acting on the roots of an
irreducible defining polynomial for $K$.
A typical local target is then a pair
$((e_1,\ldots,e_t), \sum_{j=1}^t c_j)$ with the list of $e_j$ weakly
decreasing.  The ramification indices $(e_1,\ldots,e_t)$ give a
partition of $[K:\Q]$, and the discriminant of the local algebra,
which equals the $p$-part of the discriminant of $K$, is
$(p)^{\sum_j c_j}$.

A local target at $p$ determines a list of congruences modulo some
fixed power of $p$.  For tamely ramified primes we work modulo $p$, while we use higher
powers for wildly ramified primes.  When there is more than one
ramifying prime, the lists of congruences are simply combined via the
Chinese remainder theorem.

\section{Mixed degree targeted Hunter searches}
\label{mixed}
In a mixed degree targeted search, one has a Galois group
$G$ and transitive permutation representations of two different degrees 
$n<m$.   Each degree $n$ field $K_n$ with Galois group
$G \into S_n$ determines a degree $m$ field $K_m$ 
with Galois group $G \into S_m$.  
Targets are triples $((e_1,\ldots,e_t), c_n, c_m)$ where 
$c_n$ is the local discriminant exponent for the small degree
fields searched, while $c_m$ is the local discriminant exponent
of the larger degree fields actually sought.  One uses the values
$p^{c_m}$ to decide which combinations of targets to search, 
and $((e_1,\dots,e_t),c_n)$ to carry out the actual
search in degree $n$.   
We describe
how one deals with the two degrees here, often
by using our first case with $n=7$ and $m=8$ as an example.   
Once we have the degree $n$ polynomials in hand, we compute the
corresponding degree $m$ polynomials as resolvents using {\em Magma}
\cite{magma}.

\subsection{Tame ramification}
\label{tame}  
The behavior of tame ramification under degree changes is straightforward.
Let $K$ be a degree $n$ number field, $G$ its Galois group, and $p$ a
tamely ramified prime.  The inertia subgroup $I$ for a prime above $p$
is cyclic; let $\sigma$ be a generator.
Via the given inclusion $G \subseteq S_n$, we let $e_1, e_2,\ldots, e_t$ be the cycle type
of $\sigma$.  These match the $e_j$ of the local target described
above.  The exponent of $p$ in the discriminant of $K$ is then given by 
\begin{equation}
 \label{tamec}
c_n = \sum_{j=1}^t (e_j-1) = n-t.
\end{equation}

When one is considering also a second degree $m$, one just
runs through the above procedure a second time.  In our first case, $G \cong \gseptic \cong \g$,
each row on Table~\ref{tame7table} represents a candidate for $\sigma$.  
The row then gives the corresponding pair of partitions $(\lambda_7,\lambda_8)$ and 
pair of discriminant exponents $(c_7,c_8)$. 
These discriminant exponents are computed
from the partitions via formula \eqref{tamec}.  
\begin{table}[htb]
\caption{Cycle types and discriminant exponents for $\gseptic \cong \g$ in
degrees $7$ and $8$. \label{tame7table}}
\[
\begin{array}{l|l||c|c}
\lambda_7& \lambda_8& c_7 & c_8 \\
\hline
7 & 71 & 6 & 6 \\
421 & 44 & 4 & 6 \\
331 & 3311 & 4 & 4 \\
22111 & 2222 & 2 & 4 
\end{array}
\]
\end{table}

Note that if a prime $p$ is tamely ramified in our pair of fields $(K_7,K_8)$,
 then its minimal contribution to the discriminant of the octic is $p^4$.  Thus,
when searching for octic fields with absolute discriminant $\leq B$,
we need only consider primes $p\leq \sqrt[4]{B}$.  Our largest
search used $B=30^8$, so $p\leq 900$.

The relation $D_7\mid D_8$ mentioned in the introduction is due to the
fact that we always have $c_7\leq c_8$, and that this inequality also holds for
wildly ramified primes.  In two of the tame cases, one has equality, but in the other two tame cases
one has strict inequality.    Our method using targeted searches makes
use of the strictness of these latter inequalities.  

\subsection{Wild ramification}
\label{wild}
An explicit description of the behavior of wild $p$-adic ramification under 
degree changes becomes rapidly more complicated as
$\mbox{ord}_p(G)$ increases.  We describe just our case $G \cong \gseptic \cong \g$ here,
as this case represents the basic nature of the general case well.  

Since $|G|=2^3\cdot 3\cdot 7$, the only primes which can be wildly
ramified in a $G$ extension are $2$, $3$, and $7$.
For a subgroup of $G$ to be an inertia group for a wildly ramified
prime $p$, it must be an extension of a cyclic group of order prime to
$p$ by a non-trivial $p$-group.  The candidates for a $G=\g$ extension
are given in Table~\ref{wildtable}.
\begin{table}[htb]
\caption{Wild ramification data for $\g$. \label{wildtable}}
\[ \begin{array}{c|c||c|cc|c||l} 
p &I &D & \multicolumn{2}{c|}{\lambda_7} & \lambda_8& \multicolumn{1}{c}{ (c_7, c_8)} \\
\hline\hline
7 &C_7\sd C_3 &C_7\sd C_3& \multicolumn{2}{c|}{7} & 71 & (8,8), (10,10)  \\
&C_7 & C_7,C_7\sd C_3& \multicolumn{2}{c|}{7} & 71 & (12,12) \\
\hline\hline
3&S_3 & S_3& \multicolumn{2}{c|}{331} & 62&  (6,8),(10,12) \\
&C_3 &C_3,S_3&  \multicolumn{2}{c|}{331} & 3311 & (8,8)  \\
\hline\hline
2&A_4 &S_4& 43&61 & 44 &  (6,8),(10,16) \\
&D_4 &D_4& \multicolumn{2}{c|}{421} & 8 & (12,22),(14,24) \\
&C_4 &C_4,D_4& \multicolumn{2}{c|}{421} & 44 &(14,22) \\
&V &V,D_4,A_4 & 2221&4111 & 44  & (6,12),(8,16) \\
&C_2 &C_2,V,C_4& \multicolumn{2}{c|}{22111} & 2222  & (4,8),(6,12)
\end{array}
\]
\end{table}
They run over all of the non-trivial proper subgroups of $\g$ up to
conjugation, with the exception of two conjugacy classes of subgroups
isomorphic to $S_4$.

Each subgroup in the table is a candidate for being the inertia group
for a wild prime for only one prime.  The horizontal lines
separate the subgroups according to this prime.
The second column gives the isomorphism type of the candidate for
inertia, and the third column gives corresponding candidates for the
decomposition group.  Over other $2$-adic ground fields, $A_4=I=D$ 
is possible, but not over $\Q_2$ since there is no ramified $C_3$
extension of $\Q_2$.

The columns labeled $\lambda_7$ and $\lambda_8$ show the orbit sizes of the
actions of $I$ in the degree seven and eight representations respectively.  There are two
possibilities for $A_4$ and $V$ in degree $7$, so we give both.  
The orbit sizes are helpful in
determining the data $c_7$ and $c_8$ in each case, and $\lambda_7$ is 
the partition of $7$ needed for carrying out the targeted Hunter search.

In most cases, it is clear from Galois theory how to interpret the
orbit sizes.  For example, inside a Galois $A_4$ field, there are
unique subfields of degrees $3$ and $4$ up to isomorphism.  So the $4$
in the first $A_4$ entry is for the usual quartic representation, and
the $3$ is its resolvent cubic.  More detailed computations with the
groups allow us to resolve the two ambiguities, which are as follows.
\begin{itemize}
\item A Galois $D_4$ field has three quadratic subfields and three
  quartic subfields (up to isomorphism).  In the degree $7$ partition
  $421$, the $4$ represents a quartic stem field, say defined by a
  polynomial $f$, and then the $2$ represents the field obtained from
  a root of $x^2-\textrm{Disc}(f)$.
\item A $V$ field has three quadratic subfields.  In the line
  for $V$, the $2221$ represents the product of these quadratic fields
  and $\Q_2$. 
\end{itemize}

The last column gives a list of candidate pairs $(c_7,c_8)$,
coming by analyzing the corresponding local extensions.  Some cases
can be done using just Galois theory and general properties
of extensions of local fields. A simple approach, however, which applies 
to all cases is to make use of the complete lists of the relevant local
fields \cite{jr-local-database,lmfdb}.
For example, suppose $2$ is wildly ramified with inertia subgroup
isomorphic to $A_4$.  Then, the decomposition group is $S_4$ and 
there are three possibilities for a Galois $S_4$ field with $A_4$ as its inertia
subgroup.  It is then a matter of checking the discriminants of the
relevant fields to complete the $A_4$ line.

The list of targets for each
prime is fairly straightforward to read off Table~\ref{wildtable}.  For
example, for $p=3$ we have only $((3,3,1), 6, 8)$, $((3,3,1), 8, 8)$, and
$((3,3), 10, 12)$.  With $p=2$, one has $((4,2,1),
14, 24)$ from $I=D_4$ and $((4,2,1), 14, 22)$ from $I=C_4$; however, only the
latter gets used since it has the same partition and $c_7$ and a
smaller value of $c_8$.

\subsection{Further savings}
    Various techniques can reduce the number of polynomials that it is 
necessary to search.   Again we illustrate these reductions by our
first case with $G \cong \gseptic \cong \g$.  The first and fourth reductions
below simply eliminate some local targets $((e_1,\dots,e_t),c_7,c_8)$
from consideration.  The second and third let us reduce
the size of some of the local targets.    

\subsubsection{Savings from evenness at half the tame primes}
Our first savings comes from $\gseptic$ being an even
subgroup of $S_7$, i.e., from the inclusion $\gseptic \subset A_7$.  
To obtain this savings, we make use of the following general lemma.
\begin{lemma}
  Suppose $n$ and $p$ are distinct primes, $K$ is a degree $n$ number 
  field whose Galois group $G$ is contained in $A_n$, and $p$ is totally 
  ramified in $K$.  Then $p$ is a quadratic residue modulo $n$.
\end{lemma}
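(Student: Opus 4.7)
The plan is to locate the inertia and Frobenius elements explicitly as permutations of $\Z/n\Z$ inside $G$, and then finish with Zolotarev's lemma.

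First, I would study the inertia group $I$ at a prime of the Galois closure $K^g$ above $p$. Total ramification of $p$ in $K$ means that $I$ acts transitively on the $n$ roots of a defining polynomial. Since $p\ne n$, the ramification is tame, so $I$ is cyclic. A cyclic subgroup of $S_n$ that acts both faithfully and transitively on $n$ points must be regular (its point stabilizer is normal and hence fixes every point, so trivial by faithfulness), so $|I|=n$ and $I=\langle\sigma\rangle$ for an $n$-cycle $\sigma$. The case $n=2$ is immediately ruled out, since then $G\subseteq A_2$ would be trivial, contradicting transitivity; so $n$ is odd.

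Next, by the standard structure of the tame quotient of $\Gal(\overline{\Q}_p/\Q_p)$, a lift $\phi$ of Frobenius in the decomposition group $D\subseteq G$ satisfies $\phi\sigma\phi^{-1}=\sigma^p$. I would identify the $n$ roots with $\Z/n\Z$ so that $\sigma$ becomes the translation $x\mapsto x+1$. The relation $\phi(x+1)=\phi(x)+p$ then forces $\phi$ to be the affine permutation $\phi(x)=px+c$ for some $c\in\Z/n\Z$.

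The crux is computing the signature of $\phi$. Factoring $\phi$ as the translation $x\mapsto x+c$ followed by the multiplication $x\mapsto px$, the translation is either the identity or a single $n$-cycle, hence even since $n$ is odd. By Zolotarev's lemma, the signature of $x\mapsto px$ on $\Z/n\Z$ equals the Legendre symbol $\left(\frac{p}{n}\right)$. Therefore $\mathrm{sign}(\phi)=\left(\frac{p}{n}\right)$, and since $\phi\in G\subseteq A_n$ we conclude $\left(\frac{p}{n}\right)=1$, as desired. The main potential obstacle is keeping the bookkeeping conventions straight when invoking Zolotarev; otherwise the argument is a clean application of tame local Galois theory.
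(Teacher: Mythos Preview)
Your proof is correct and follows essentially the same route as the paper: both identify the inertia group as cyclic of order $n$, invoke the tame relation $\phi\sigma\phi^{-1}=\sigma^{p}$ to realize the Frobenius lift as an affine map $x\mapsto px+c$ on $\Z/n\Z$, and then read off $\left(\frac{p}{n}\right)=1$ from the evenness of this permutation. The only difference is that you make the final implication explicit via Zolotarev's lemma, whereas the paper simply asserts that an even Frobenius acting by multiplication by $p$ forces $p$ to be a square modulo $n$.
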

\begin{proof}
  Let $D$ be the decomposition group for a prime above $p$.  Tame
  Galois groups over $\Q_p$ are $2$-generated by $\sigma$ and
  $\tau$ where $\sigma\tau\sigma^{-1}=\tau^p$ (see
  \cite{iwasawa-local-galois}).  Here, $\tau$ is a generator of the
  inertia subgroup and $\sigma$ is a lift of Frobenius.  Thus $D$
  isomorphic to $(\Z/n)\sd (\Z/f)$ where $f$ is the order $\sigma$,
  and the action of $\sigma$ on $\Z/n$ is multiplication by $p$.
  The Galois group locally is a subgroup of $A_n$ which forces
  $\sigma$ to be an even permutation, which in turn implies that
  $p$ is a square modulo $n$.
\end{proof}

\noindent In our situation, the lemma says that if $p\neq 7$ is totally ramified,
then $p$ must be congruent to $1$, $2$, or $4$ modulo $7$, eliminating
``totally ramified'' as a target for approximately half of the primes.

\subsubsection{Savings from evenness at $p=7$}
We can achieve a savings from the fact that $\gseptic \subset A_7$ at $p=7$ as
well. 
It is evident from the complete lists of degree $7$ extensions of
$\Q_7$ in \cite{jr-local-database, lmfdb} that having discriminant
$(7)^c$ with $c$ even is not sufficient to ensure that the Galois
group is even. 
 In fact, for each even value
of $c$, only half of the fields, counted by mass, have even Galois group.
We computed lists of congruences for each $7$-adic septic field with
even Galois group and then merged the lists of congruences.

To target a specific $7$-adic field, we start with a defining
polynomial such that a power basis formed from a root $\alpha$ will
generate the ring of integers over $\Z_7$.  We then consider a generic
element 
$\beta = a_0+a_1\alpha\pots a_6\alpha^6$ and
compute its characteristic polynomial in $\Z_7[a_0\cots a_6][x]$.
Working modulo $7^2$ we then enumerate all possibilities for the
polynomial.

\subsubsection{Savings from $\mbox{\rm ord}_3(G)=1$ at $p=3$}
Cases when $3$ is wildly ramified also offer an opportunity to
reduce the search time by more refined targeting.  As can be seen from
the two relevant lines of Table~\ref{wildtable}, the decomposition subgroup is 
isomorphic to $C_3$ or $S_3$.  In
either case, the orbit partition for the decomposition group is $(3,3,1)$.
Thus, a defining polynomial factors as the product of two cubics times
a linear polynomial over $\Z_3$.  

The savings comes from the fact that the two cubics have to
define the same $3$-adic field.  So, the procedure here starts
with computing possible polynomials for each ramified cubic
extension of $\Q_3$ modulo some $3^r$.  
We take all products of the form $(x+a)g_1g_2$ where the $g_i$ come
from the list for a given field and $a$ runs through all possibilities 
in $\Z/3^t$.  In our actual search, we worked modulo $3^2$.

The resulting local targets are considerably smaller.
For example, a target $((3,3,1),8)$ from our general method includes
cases where the two cubic factors define non-isomorphic
fields with discriminant ideal $(3)^4$ and also cases where the cubics have discriminant ideals
$(3)^3$ and $(3)^5$.  All these possibilities are not searched
in our refinement. 

\subsubsection{Exploiting arithmetic equivalence at $p=2$}
\label{arith-equiv}
The final refinement we use exploits the fact that for each octic
field sought, we need to find
just one of its two siblings in degree $7$.
These pairs of septic fields are examples of arithmetically equivalent
fields.  The two fields $K_{7a}$ and $K_{7b}$ have the same Dedekind zeta function, the same
discriminant, and the same ramification partition at all odd 
primes.   However, at $p=2$ one can have $\lambda_{7a} \neq \lambda_{7b}$.

In Table~\ref{wildtable}, there are two orbit partitions for the inertia
group $A_4$, and again two orbit partitions for $V$.  For each of
these cases, if a septic $\gseptic$ field has inertia subgroup $I$ and one orbit
partition, its sibling has the other orbit partition for $I$.
We save by 
targeting $43$ and $4111$, 
but not their transforms $61$ and $2221$.  

\subsubsection{Savings from global root numbers being $1$}
\label{signs}
As we mentioned in \Se\ref{hunter}, our code does not distinguish signatures.  
If it did, there would be an opportunity for 
yet further savings as follows.   A separable algebra
$K_v$ over $\Q_v$ has a local root number $\epsilon(K_v) \in \{1,i,-1,-i\}$.
For $v=\infty$, one has 
 $\epsilon(\R^r \C^s) = (-i)^s$.   For $v$ a prime $p$,
 one has $\epsilon(K_p) = 1$ unless the inertia group
 $I_p$ has even order.   Further information about
 local root numbers is at \cite[\S3.3]{jr-local-database},
 with many root numbers calculated on the associated database.
  The savings comes from the reciprocity relation $\prod_v \epsilon(K_v) = 1$,
  so that the signature is restricted by the behavior at ramifying 
  primes.  
  
  While we are not using local root numbers in our searches, we are using 
  them in our interpretation of the output of our first case.  
  Interesting facts here include the general
  formulas  $\epsilon(K_{7a,v}) = \epsilon(K_{7b,v})$ and 
  $\epsilon(K_{8,v}) = 1$.   Also, from Tables~\ref{tame7table} and
  \ref{wildtable}, 
  one has equality of 
  discriminant exponents $c_7=c_8$ at a prime $p$
  if and only if $|I_p|$ is odd; so in this case the septic
  sign $\epsilon(K_{7a,p}) = \epsilon(K_{7b,p})$ is $1$.  
\section{Results for $\g$ and related groups}
\label{summary}

\subsection{A complete list of $\g$ octics.}
Our search for $\g$ fields with $\rd(K) \leq 21$
took $41$ CPU-hours and confirmed that the discriminant $21^8$ 
given in \cite{kluner-malle} is indeed the smallest.  The extended
search through $\rd(K) \leq 30$ took approximately four
CPU-months.  
In this extended search, we combined targets for a given prime in a subsearch whenever the contribution to the
octic field discriminant is the same.  In this sense, the computation
consisted of $1471$ subsearches of varying difficulty.  The fastest
$380$ cases took at most $10$ seconds each, the median length case
took $6.5$ minutes, and the slowest ten cases took from $20$ to $35$
hours each.  The slowest cases all involved searches where 
$c_7=c_8$ for every ramifying prime.  This larger search 
found twenty-five fields.

\begin{thm}
\label{octicthm1}
  There are exactly $25$ octic fields with Galois group $\g$ and discriminant
  $\leq 30^8$.  The smallest discriminant of such a field is $21^8$
  and the field is given by
  \[ x^8 - 4x^7 + 7x^6 - 7x^5 + 7x^4 - 7x^3 + 7x^2 + 5x + 1 .\]
\end{thm}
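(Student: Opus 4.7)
The plan is to implement the mixed degree targeted Hunter search of Section~\ref{mixed}, exploiting the fact that every octic $\g$ field $K_8$ has two septic $\gseptic$ siblings $K_{7a}, K_{7b}$ with $D_{7a} = D_{7b} \mid D_8$. Hence a complete search for septic $\gseptic$ fields whose discriminant divides some integer bounded by $30^8$ will, upon passage to the octic resolvent, yield every octic field in the claimed list. Concretely, the goal is a finite enumeration of local targets $((e_1,\ldots,e_t), c_7, c_8)$ at each ramifying prime such that the resulting octic discriminant is at most $30^8$, and then a targeted Hunter search in degree~$7$ for each allowed combination.

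First I would assemble the list of admissible local targets prime by prime. At tame primes this is read directly from Table~\ref{tame7table}; since the minimal tame wild-prime contribution to $D_8$ is $p^4$, only primes $p \leq 30^2 = 900$ need be considered. At the wild primes $p \in \{2,3,7\}$ the admissible targets are read from Table~\ref{wildtable}. I would then prune this list with the four refinements of Section~\ref{mixed}: the evenness lemma at odd tame primes, the refined $7$-adic congruence lists for septic fields with even Galois group, the $C_3/S_3$-factorization trick at $p=3$, and the arithmetic equivalence argument at $p=2$ which lets us target only one of each pair $(\lambda_{7a},\lambda_{7b})$. Then, combining choices across primes so that $\prod_p p^{c_8} \leq 30^8$, I obtain the family of subsearches (of which the paper reports $1471$).

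Next, for each subsearch I would run the targeted Hunter search of Section~\ref{hunter} in degree~$7$, producing every septic polynomial whose local behavior at each ramifying prime matches a target and whose discriminant is bounded by $30^8$. For each septic $f$ found I would compute an octic resolvent via \emph{Magma} corresponding to the embedding $\gseptic \hookrightarrow S_8$, verify that the resulting field $K_8$ has Galois group $8T37 = \g$ (discarding spurious septics with smaller Galois group), and retain those with $|D_8| \leq 30^8$. Since each octic $\g$ field has two septic siblings, a final deduplication step (by comparing splitting fields, or equivalently by polynomial invariants of $K_8$) gives the complete list. To establish the minimum, I would sort by $|D_8|$, identify the field of discriminant $21^8$, and verify that the polynomial $x^8 - 4x^7 + 7x^6 - 7x^5 + 7x^4 - 7x^3 + 7x^2 + 5x + 1$ defines it by direct computation of its discriminant, Galois group, and (if desired) an isomorphism check against the resolvent output.

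The main obstacle is purely computational: the volume of polynomials inspected by the Hunter loop grows sharply with $c_7$, and the hardest subsearches are precisely those at primes where Table~\ref{wildtable} forces $c_7=c_8$, since there the targeting gains no leverage from the strict inequality $c_7 < c_8$. Controlling these subsearches — each taking many CPU-hours — is what makes the full refinement program of Section~\ref{mixed} essential rather than optional; without the savings from evenness at $p=7$, the $(3,3,1)$ factorization at $p=3$, and the arithmetic-equivalence reduction at $p=2$, the four CPU-month runtime would balloon beyond feasibility.
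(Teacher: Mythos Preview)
Your proposal is correct and follows essentially the same approach as the paper: a mixed degree targeted Hunter search in degree~$7$, with local targets assembled from Tables~\ref{tame7table} and~\ref{wildtable}, pruned by the four refinements of Section~\ref{mixed}, and then passage to octic resolvents via \emph{Magma}. You have also correctly identified the computational bottleneck as the subsearches with $c_7=c_8$ at every ramifying prime, which is exactly what the paper reports.
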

\noindent The full list of fields 
is available in a computer-readable format by searching the website
\cite{jr-global-database}.

Figure~\ref{pict7and8} gives a visualization of how complete lists of $\g$ octics
and complete lists of $\gseptic$ septics have little to do with one another.   The 
$25$ $\g$ octics of Theorem~\ref{octicthm1}
give the $24$ points beneath the $\delta_8=30$ line, the drop of one coming from 
the fact that the point $(\delta_7,\delta_8) \approx (24.88,28.00)$ comes from
two fields.   Theorem~\ref{septicthm} likewise says that there are only $23$ points to the left of 
the $\delta_7=12$ line.   The rectangle where both $\delta_7 \leq 12$ and  $\delta_8 \leq 30$
contains just a single point.

Another way of making clear how the complete lists differ sharply is to consider 
first fields.  The first octic field, highlighted in Theorem~\ref{octicthm1},
 is a sibling of the famous Trinks field $\Q[x]/(x^7-7x+3)$ \cite{Tr}.  As
noted earlier, septic $\gseptic$ fields come in sibling pairs with the same
discriminant.  In the list of septic $\gseptic$ fields ordered by root
discriminant at \cite{jr-global-database}, the Trinks field is currently in the 
$1009$th pair, with root discriminant about $23.70$. 
Conversely, the octic sibling of the first septic $\gseptic$ pair currently
ranks $66$th in the corresponding list of octic fields
at \cite{jr-global-database}.  

\begin{figure}[htb]
\begin{center}
\includegraphics{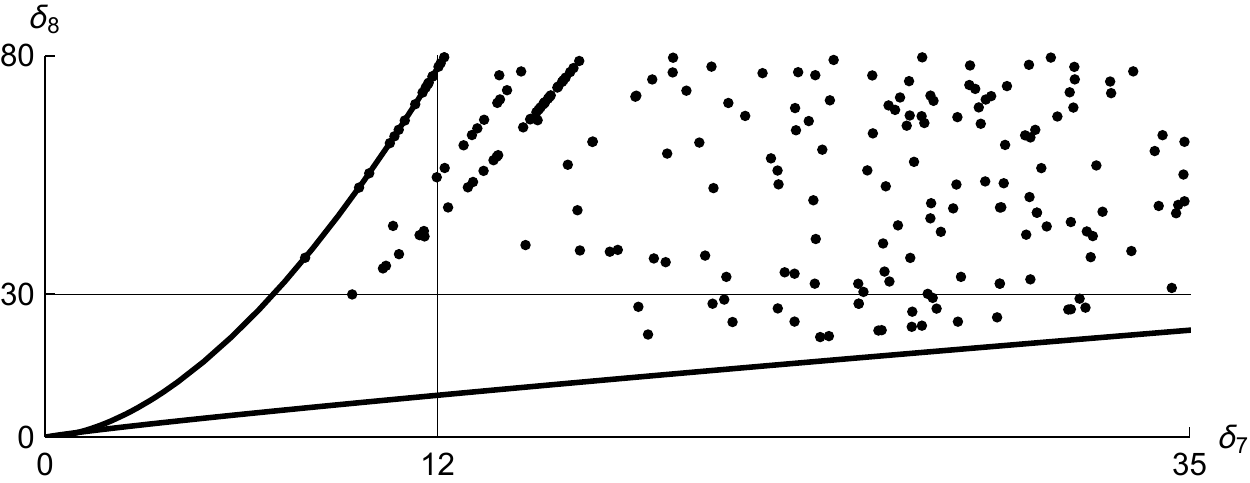} 
\caption{\label{pict7and8} Root discriminant pairs $(\delta_7,\delta_8)$ 
associated to $\GL_3(2) \cong \PSL_2(7)$, including all pairs with $\delta_7 \leq 12$ 
from Theorem~\ref{septicthm} and all pairs with $\delta_8 \leq 30$ from Theorem~\ref{octicthm1}.}
\end{center}
\end{figure}

Figure~\ref{pict7and8} also gives some sense of the
geography of sibling triples $(K_{7a},K_{7b},K_8)$. 
 The upper bound corresponds to the extreme case $D_8 = D_7^2$,
graphed as $\delta_8 = \delta_7^{7/4}$. 
The lower bound likewise corresponds to the extreme 
case $D_8 = D_7$, graphed as 
$\delta_8 = \delta_7^{7/8}$.   

As one example of  the
details visible in this geography, note that the figure
shows no point on the lower bound.    This is because 
a necessary and sufficient condition to be
on the lower bound is that 
all inertia groups have 
odd size, by Tables~\ref{tame7table} and \ref{wildtable}. 
As mentioned in \Se\ref{signs}, this condition forces the septic $p$-adic
local signs to all be $1$.  Reciprocity then forces
the infinite local sign to be $1$ as well, which
forces the fields to be totally real.   We
expect that the first instance of such a totally real sibling triple 
 is well outside the window of Figure~\ref{pict7and8}.  
There is just one such sibling triple currently
on the Kl\"uners-Malle database \cite{kluner-malle}, with $D_7 = D_8 = 2^4 13^4 131^4$,
and thus $(\delta_7,\delta_8) = (104.33,58.36)$.  
On the other hand, the figure shows twelve points seemingly
on a curve just above the lower bound.  These points
are the ones with $D_8/D_7 = 9$, and thus indeed 
lie on the curve $\delta_8 = 3^{1/4} \delta_7^{7/8}$.  
They come from the twelve fields having the prime $3$ in
bold on Table~\ref{fulltable}.

\subsection{Class groups and class fields} 
\label{classnumbers} The class groups of all twenty-five
fields in Theorem~\ref{octicthm1}
can be computed unconditionally by either {\em Magma} \cite{magma} or 
{\em Pari} \cite{gp}.   They are all cyclic and Table~\ref{fulltable}
gives their orders.    The fact that all but three of these class 
groups are non-trivial is already remarkable.  By way of contrast,  the first 
$620$ $S_5$ quintic fields ordered by 
absolute discriminant all have trivial class group. 

Using Theorem~\ref{octicthm1}, one can get complete
lists of fields with root discriminant $\leq 30$ for many other 
groups via class field theory.   We restrict ourselves 
to three corollaries, chosen because they interact interestingly
with the class groups on Table~\ref{fulltable}.  
The Galois groups involved in these corollaries are all even,
and so absolute discriminants coincide with discriminants.  

\begin{table}[htb]
\[\begin{array}{c|c|c|c|crcccc}
\textrm{rd}(K_8) & \textrm{rd}(K_7) & D_8 & D_7 & h & = & a & \ell & c & e \\
\hline
21.00 & 23.70 & {\bf3}^{8} 7^{8} & {\bf3}^{6} 7^{8} & 4 & = & 2 & 2 & \text{} & \text{} \\
21.21 & 23.97 & 2^{6} {\bf3}^{4} 53^{4} & 2^{6} {\bf3}^{2} 53^{4} & 4 & = & 2 & 2 & \text{} & \text{} \\
21.54 & 18.44 & 2^{16} 29^{4} & {\bf2}^{10} 29^{4} & 1 &   & \text{} & \text{} & \text{} & \text{} \\
22.37 & 25.48 & 2^{6} {\bf3}^{4} 59^{4} & 2^{6} {\bf3}^{2} 59^{4} & 4 & = & 2 & 2 & \text{} & \text{} \\
22.45 & 25.58 & {\bf3}^{6} 97^{4} & {\bf3}^{4} 97^{4} & 2 &  = & 2 & \text{} & \text{} & \text{} \\
23.16 & 26.50 & 2^{6} {\bf3}^{6} 11^{6} & 2^{6} {\bf3}^{4} 11^{6} & 6 & = & 2 & \text{} & 3 & \text{} \\
23.39 & 26.81 & {\bf3}^{4} 5^{4} 11^{6} & {\bf3}^{2} 5^{4} 11^{6} & 4 & = & 2 & 2 & \text{} & \text{} \\  
24.16 & 21.02 & 2^{16} 11^{6} & {\bf2}^{10} 11^{6} & 1 &   & \text{} & \text{} & \text{} & \text{} \\
24.23 & 27.91 & {\bf3}^{6} 113^{4} & {\bf3}^{4} 113^{4} & 2 &  = & 2 & \text{} & \text{} & \text{} \\ 
24.25 & 22.92 & 2^{8} {\bf3}^{4} 7^{8} & 2^{6} {\bf3}^{2} 7^{8} & 2 &   = & 2 & \text{} & \text{} & \text{} \\
25.14 & 29.11 & 2^{6} {\bf3}^{6} 43^{4} & 2^{6} {\bf3}^{4} 43^{4} & 2 &  = & 2 & \text{} & \text{} & \text{} \\
26.32 & 26.52 & 2^{6} {\bf5}^{4} 7^{8} & 2^{6} {\bf5}^{2} 7^{8} & 2 &  = & 2 & \text{} & \text{} & \text{} \\
26.78 & 31.29 & {\bf3}^{4} 239^{4} & {\bf3}^{2} 239^{4} & 4 &  = & 2 & 2 & \text{} & \text{} \\
26.84 & 31.37 & 2^{6} {\bf3}^{6} 7^{8} & 2^{6} {\bf3}^{4} 7^{8} & 2 &  = & 2 & \text{} & \text{} & \text{} \\
26.97 & 27.26 & 2^{6} {\bf5}^{6} 23^{4} & 2^{6} {\bf5}^{4} 23^{4} & 8 &  = & 4 & 2 & \text{} & \text{} \\
27.01 & 22.41 & 2^{8} {\bf5}^{4} 11^{6} & 2^{6} {\bf5}^{2} 11^{6} & 2 &  = & 2 & \text{} & \text{} & \text{} \\
27.17 & 31.82 & 2^{6} {\bf3}^{8} 29^{4} & 2^{6} {\bf3}^{6} 29^{4} & 4 & = & 2 & 2 & \text{} & \text{} \\
27.35 & 18.14 & 2^{8} {\bf11}^{4} 17^{4} & 2^{6} {\bf11}^{2} 17^{4} & 2 & = & 2 & \text{} & \text{} & \text{} \\  
28.00 & 20.41 & {\bf2}^{16} 7^{8} & {\bf2}^{8} 7^{8} & 2 &  = & 2 & \text{} & \text{} & \text{} \\
28.00 & 24.88 & 2^{16} 7^{8} & {\bf2}^{10} 7^{8} & 1 &  & \text{} & \text{} & \text{} & \text{} \\
28.00 & 24.88 & 2^{16} 7^{8} & {\bf2}^{10} 7^{8} &{2} &  = & \text{} & \text{} & \text{} & 2 \\
28.86 & 20.77 & 7^{8} {\bf17}^{4} & 7^{8} {\bf17}^{2} & 2 & = & 2 & \text{} & \text{} & \text{} \\ 
29.05 & 31.64 & {\bf2}^{8} 211^{4} & {\bf2}^{4} 211^{4} & 4 & = & 2 & 2 & \text{} & \text{} \\
29.22 & 27.14 & 2^{4} {\bf7}^{4} 61^{4} & 2^{4} {\bf7}^{2} 61^{4} & 4 &  = & 2 & 2 & \text{} & \text{} \\
29.94 & 9.39 & 2^{6} {\bf317}^{4} & 2^{6} {\bf317}^{2} & 6 &  = & 2 & \text{} & 3 & \text{} \\
\end{array}\]
\caption{\label{fulltable}  Discriminants of the
octic $\PSL_2(7)$ fields of Theorem~\ref{octicthm1} 
and their septic siblings, and also the class number
$h$ of each octic.  The boldface conventions
and the factorization $h = a \ell c e$ are explained in the paragraph
containing equation~\eqref{alc}.}
\end{table}

To start, consider the unramified tower $K_{32}/K_{16}/K_8$ coming
from the first field $K_8$ and its cyclic class group of order four.
Defining equations can be computed using {\em Magma} \cite{magma} or
{\em Pari}'s \cite{gp} class field theory commands.  Following the
conventions of \Se\ref{notation}, let $G_{16} = \Gal(K_{16})$ and
$G_{32} = \Gal(K_{32})$ be the corresponding Galois groups.  Then
$K_{16}$ and $K_{32}$ are the unique fields with smallest 
discriminant for these Galois groups.  In fact, $G_{16}$ is just the
Cartesian product $\PSL_2(7) \times C_2 = 16T714$.  More
interestingly, $G_{32} = 32T34620$ is a non-split double cover of
$G_{16}$, having $\SL_2(7) = 16T715$ as a subgroup with quotient group
$C_2$.  Similar analysis for all twenty-five base fields,
allowing ramified towers $K_{32}/K_{16}/K_8$ as well, gives the
following consequence of Theorem~\ref{octicthm1}.

\begin{cor}  There are exactly $25$ number fields with Galois group $\g \times C_2 = 16T714$ and
discriminant $\leq 30^{16}$.  The smallest discriminant of such a field is $21^{16}$
and the field has defining polynomial 
\begin{align*}
x^{16}&-4 x^{15}+9 x^{14}-14 x^{13}+14 x^{12}-14 x^{10}+8 x^9+45 x^8-82 x^7 \\ 
  &+49 x^6+63 x^5-112 x^4+49 x^3+99 x^2-130 x+100.
\end{align*}
There are exactly $14$ number fields with Galois group $32T34620$ and  
discriminant $\leq 30^{32}$.   The smallest discriminant of such a field is $21^{32}$
and the field has defining polynomial 
\begin{align*}
x^{32}&-x^{31}+2 x^{30}+x^{29}+8 x^{28}-7 x^{27}+21 x^{26}-9 x^{25}-12
    x^{24}+248 x^{23} \\ &-548 x^{22}-65 x^{21} +2653 x^{20}-4879 x^{19}+2564
    x^{18}+4198 x^{17}  -7780 x^{16} \\ &+3593 x^{15} +4020 x^{14}-7014 x^{13}+4935
    x^{12}-2042 x^{11}+929 x^{10}-787 x^9 \\ &+695 x^8-215 x^7+70 x^6-42 x^5+15
    x^4-15 x^3+2 x^2+x+1.
\end{align*}
\end{cor}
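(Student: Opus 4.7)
The plan is to reduce both assertions to finite class field theory computations above the octic fields of Theorem~\ref{octicthm1}. For the degree-$16$ claim, any number field $L$ with $\Gal(L^g/\Q)\cong\g\times C_2$ contains a unique octic $\g$-subfield $K_8$ and a unique quadratic subfield $\Q(\sqrt d)$, so $L = K_8\cdot\Q(\sqrt d) = K_8(\sqrt d)$; here $d$ can be taken in $\Q$ (rather than only in $K_8$) because $\g\times C_2$ is the \emph{split} extension of $\g$ by $C_2$, which by inflation--restriction (using $H^1(\g,C_2)=0$ from perfectness of $\g$) forces the character of $G_{K_8^g}$ cutting out $L^g/K_8^g$ to be a pullback from $G_\Q$. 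The conductor--discriminant formula
\[
D_L \;=\; D_{K_8}^{2}\cdot N_{K_8/\Q}\bigl(\frakd(L/K_8)\bigr)
\]
combined with $D_L \le 30^{16}$ yields $D_{K_8} \le 30^{8}$, so $K_8$ is one of the $25$ fields of Theorem~\ref{octicthm1}.

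For each such $K_8$, I then enumerate squarefree $d$ producing $L=K_8(\sqrt d)$ with $D_L\le30^{16}$. Since $N_{K_8/\Q}(\frakd(L/K_8))\le 30^{16}/D_{K_8}^{2}$, only finitely many primes can ramify in $\Q(\sqrt d)$ and their exponents are bounded, so the enumeration is finite; in practice it is carried out in {\em Magma} via the ray class groups of $K_8$. The minimum $D_L=21^{16}$ comes from the first $K_8$ (of discriminant $21^{8}$, with cyclic class group of order $4$) through its unique unramified quadratic extension. For the degree-$32$ claim, an analogous argument applies one level up: using that $G_{32}=32T34620$ is a double cover of $G_{16}=\g\times C_2$ containing $\SL_2(7)$ with index $2$, any $N$ with $\Gal(N)=32T34620$ contains a unique degree-$16$ subfield $M$ with $\Gal(M)=16T714$; the bound $D_N\le 30^{32}$ forces $D_M\le 30^{16}$, so $M$ lies among the $25$ degree-$16$ fields enumerated above.

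The principal obstacle appears in the degree-$32$ step. Since the extension $1\to C_2\to G_{32}\to G_{16}\to 1$ is \emph{non-split} (reflecting the non-trivial Schur multiplier of $\g$), the character of $G_M$ cutting out $N/M$ is \emph{not} a pullback from $G_\Q$; one must therefore work with the full ray class groups of $M$ rather than of $\Q$ alone. Moreover, among the quadratic extensions $N/M$ meeting the discriminant bound, one must single out those whose Galois closure has group exactly $32T34620$ rather than an alternative order-$672$ extension of $G_{16}$ by $C_2$ (for example the split cover $\g\times C_2\times C_2$). Both the enumeration and the Galois-group identification are carried out in {\em Magma}. The minimum $D_N=21^{32}$ comes from the Hilbert class field of the first $K_8$, yielding the unramified tower $K_8\subset M_0\subset N_0$; aggregating the enumeration over all $25$ base fields produces the totals of $25$ and $14$ as stated.
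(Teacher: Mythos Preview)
Your approach is correct and is essentially the one the paper takes: both reduce the corollary to class field theory computations above the twenty-five octic $\g$ fields of Theorem~\ref{octicthm1}, building towers $K_8 \subset K_{16} \subset K_{32}$ and then checking Galois groups.  The paper is quite terse here, simply saying ``similar analysis for all twenty-five base fields, allowing ramified towers $K_{32}/K_{16}/K_8$ as well,'' whereas you supply the Galois-theoretic justification (uniqueness of the subfields $K_8 \subset L$ and $M \subset N$, the discriminant tower inequality forcing $D_{K_8}\le 30^8$ and $D_M \le 30^{16}$, and the distinction between split and non-split covers) that makes the reduction airtight.  Two small remarks: the formula you invoke, $D_L = D_{K_8}^{2}\,N_{K_8/\Q}(\frakd(L/K_8))$, is the relative discriminant tower formula rather than the conductor--discriminant formula; and your inflation--restriction argument for why $d\in\Q$ in the $16T714$ case, while correct, can be replaced by the more elementary observation that $\g$ is perfect, so $\g\times 1$ is the unique index-$2$ subgroup of $\g\times C_2$ and hence $L$ has a unique quadratic subfield, automatically rational.
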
 

As a next example, note that the sixth field $K_8$ has class number divisible by 
$3$, yielding a tower $K_{24}/K_8$.     Let $G_{24} = \Gal(K_{24})$.   To
find the field with smallest discriminant and Galois group $G_{24}$, 
we only have to look at perhaps-ramified abelian cubic extensions for the first six fields 
on the list.

The Galois group $G_{24}$ is in fact $24T284$, which is  $\g$ itself, but now in its action
on cosets of $C_7$.   So $\g$ octics are in bijection with $24T284$ fields 
via an elementary resolvent construction.  Inspecting the twenty-five $24T284$ fields
coming from the twenty-five octics gives the following result.

\begin{cor} There are exactly three number fields with Galois group $24T284$ and  discriminant
$\leq 30^{24}$.    The smallest discriminant of such a field is 
$(66^{3/4})^{22} \approx 23.16^{22}$ and the field has defining polynomial
\begin{align*}
x^{24}&-6 x^{23}+14 x^{22}-8 x^{21}-26 x^{20}+34 x^{19}+72 x^{18}-204
    x^{17} +109 x^{16} \\
    &+162 x^{15}  -148 x^{14}-260 x^{13}+496 x^{12}-248 x^{11}+18
    x^{10}-216 x^9 \\
    &+484 x^8 -402 x^7+156 x^6-74 x^5+102 x^4-76 x^3+22 x^2-2 x+1.
 \end{align*}
\end{cor}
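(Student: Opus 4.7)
\emph{Plan.}  I would reduce this corollary to Theorem~\ref{octicthm1} via the bijection between $\g$ octics and $24T284$ fields.  Since $24T284$ is $\g$ acting on cosets of $C_7$ and $C_7$ sits inside the Borel subgroup $B$ (the point-stabilizer of the degree-$8$ action), every sibling pair $(K_8,K_{24})$ shares a common Galois closure $K^g$, with $K_8=(K^g)^B$ and $K_{24}=(K^g)^{C_7}$.  Thus $K_{24}$ is computable from $K_8$ by a standard resolvent construction in \emph{Magma}.

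To compare the discriminants, decompose the two permutation representations using the character table of $\g$:
\[
  \mathrm{Ind}_B^G \mathbf{1} \cong \chi_1 \oplus \chi_7, \qquad
  \mathrm{Ind}_{C_7}^G \mathbf{1} \cong \chi_1 \oplus \chi_7 \oplus 2\chi_8,
\]
where $\chi_7,\chi_8$ are the irreducibles of dimensions $7$ and $8$.  The conductor-discriminant formula then gives $v_p(D_8) = f_p(\chi_7)$ and $v_p(D_{24}) = f_p(\chi_7) + 2 f_p(\chi_8)$, so globally $D_{24} = D_8 \cdot f(\chi_8)^2$.  The key step is to establish the inequality $f_p(\chi_7) \le f_p(\chi_8)$ at every prime $p$.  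Using the Artin conductor formula $f_p(V) = \sum_{i\ge 0} (|G_i|/|G_0|)\dim(V/V^{G_i})$ together with the fact that $\dim(\mathrm{Ind}_H^G \mathbf{1})^{G_i}$ equals the number of $G_i$-orbits on $G/H$, the inequality reduces termwise to showing that the number of $G_i$-orbits on $G/C_7$ is at most three times the number on $G/B$.  This is immediate, because the $G$-equivariant projection $G/C_7 \to G/B$ has fibers of size $[B:C_7] = 3$, so each $G_i$-orbit downstairs lifts to at most three $G_i$-orbits upstairs.

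Combining these two steps gives $v_p(D_{24}) \ge 3\,v_p(D_8)$ at every prime, hence $D_{24} \ge D_8^3$ globally.  Any $K_{24}$ with $D_{24} \le 30^{24}$ therefore has an octic sibling $K_8$ with $D_8 \le 30^8$, which must appear among the twenty-five fields of Theorem~\ref{octicthm1}.  It then remains to compute the $C_7$-resolvent $K_{24}$ for each of those twenty-five octics, record $D_{24} = D_8 \cdot f(\chi_8)^2$, and extract the three candidates satisfying $D_{24} \le 30^{24}$, singling out the minimizer and its defining polynomial.

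The main potential obstacle is the inequality $f_p(\chi_7) \le f_p(\chi_8)$ under wild ramification at $p \in \{2,3,7\}$, where one might ordinarily expect delicate case analysis using the complete lists of local extensions.  Happily, the orbit-counting argument above works termwise in the ramification filtration and is insensitive to whether the higher $G_i$ are trivial, so tame and wild primes are handled by the same elementary observation.  The remaining work is a routine computation in \emph{Magma} or \emph{Pari}.
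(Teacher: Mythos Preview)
Your overall strategy matches the paper's: use the bijection between $\g$ octics and $24T284$ fields via the resolvent construction, then run through the twenty-five octics of Theorem~\ref{octicthm1}.  The one place you diverge is in justifying $D_{24}\ge D_8^{\,3}$.  You already observed that $C_7\subset B$, but you don't exploit the immediate consequence: $K_8=(K^g)^B$ is literally a subfield of $K_{24}=(K^g)^{C_7}$, with $[K_{24}:K_8]=3$.  The tower formula $D_{24}=N_{K_8/\Q}(\mathfrak{d}_{K_{24}/K_8})\cdot D_8^{\,3}$ then gives the inequality in one line, and this is how the paper frames the reduction---indeed, it phrases the search as looking for (possibly ramified) abelian cubic extensions of the first few $K_8$'s, and then simply inspects all twenty-five resolvents.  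Your route through the conductor--discriminant formula and the termwise orbit bound $\#\{G_i\text{-orbits on }G/C_7\}\le 3\cdot\#\{G_i\text{-orbits on }G/B\}$ is correct, and it does buy you two things the paper does not state: the exact factorization $D_{24}=D_8\cdot f(\chi_8)^2$, and an argument that would continue to work if the two subgroups were not nested.  But in the present case it is more machinery than the job requires.
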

\noindent The fields in question are respectively unramified, ramified, and unramified
extensions of the sixth, eighth, and twenty-fifth field on Table~\ref{fulltable}. 

   Our discussion so far has exhibited instances of three systematic contributors
to class groups of $\g$ octics $K=K_8$.   We call these the abelian, lifting,
and cubic contributions, and they are the sources of some of the numbers 
on Table~\ref{fulltable}:
\begin{align}
\label{alc}
a & = [AK:K], &
\ell  & \in \{1,2\}, &
c & \in \{1,3\}. 
\end{align} For the abelian
contribution, $A$ is the largest cyclotomic field such that $AK$ is unramified 
over $K$.  On the table, the primes for which $A/\Q$ is ramified
are put in bold in the $D_8$ column of Table~\ref{fulltable}.  To identify the lifting contribution,
we use the septic local signs $\epsilon_v$ of \Se\ref{signs}.   
A prime $p$ is put in bold
in the $D_7$ column exactly when $\epsilon_p=-1$.  If, 
for every such odd $p$, the inertia group
$I_p$ has order divisible by $2^{{\rm ord}_2(p-1)}$, 
and if also an analogous condition at $2$ holds if $2$ is in bold, 
then $KA$ has an unramified quadratic extension
$\widetilde{KA}$ with $\widetilde{KA}/A$ having Galois group
$\SL_2(7)$.  The lifting contribution is $\ell=2$ in this
case and otherwise $\ell=1$.  The cubic contribution
is $c=3$ if the canonical extension $K_{24}/K_8$ as above
is unramified, and $c=1$ if it is ramified.   

   In the general analysis of $\PSL_2(7)$ number fields $K$, let 
$e$ denote the rest of the class number $h$, meaning
$e = h/(a \ell c)$.  On the table, $e>1$ only once.  In this case, 
computation shows that the Hilbert class field $K_{16}$ 
has defining polynomial
\begin{equation}
\label{fifthfield}
x^{16}+40 x^{14}+588 x^{12}+3808 x^{10}+12236 x^8+9856 x^6+3248 x^4+384 x^2+16.
\end{equation}
Its Galois group $\Gal(K_{16}) = 16T1506$ has order
$2^4 |\PSL_2(7)|$.  The resulting field, with discriminant $28^{16}$, however is not minimal.

\begin{cor}  There are exactly five number fields with Galois group 
$16T1506$ and discriminant $\leq 30^{16}$.    The smallest
discriminant  $2^{16} 3^8 53^8 \approx 25.22^{16}$ arises from two fields.   These fields
have defining polynomials $f(x^2)$ and $f(-3x^2)$ 
where
\[
f(x) = x^8+8 x^7+32 x^6+44 x^5+382 x^4+496 x^3+656 x^2-20 x+1.
\]
\end{cor}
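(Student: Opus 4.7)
The plan is to enumerate all degree-$16$ fields with Galois group $16T1506$ and discriminant at most $30^{16}$ by realizing each such $L$ as a quadratic extension of a $\PSL_2(7)$ octic from Theorem~\ref{octicthm1}, in parallel with the preceding corollaries.

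First, I analyze the subgroup lattice of $G = 16T1506$ (order $2^4 |\PSL_2(7)|$). The group has a normal subgroup $N \triangleleft G$ of order $16$ with $G/N \cong \PSL_2(7)$, and the conjugacy class of point stabilizers $H_{16}$ of the degree-$16$ action is characterized by the property that $H_{16} N / N$ is a Borel subgroup of $\PSL_2(7)$. The preimage in $G$ of that Borel therefore defines a unique octic subfield $K_8 \subset L$ whose own Galois closure has Galois group $\PSL_2(7)$; the Hilbert class field of equation~\eqref{fifthfield} exhibits this pattern concretely. From the tower-discriminant relation $D_L = N_{K_8/\Q}(\frakd_{L/K_8}) D_{K_8}^2$, one gets $\rd(K_8) \leq \rd(L) \leq 30$, forcing $K_8$ to be one of the $25$ fields of Theorem~\ref{octicthm1}.

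Second, for each of these $25$ base fields $K_8$, I enumerate the quadratic extensions $L/K_8$ with $N_{K_8/\Q}(\frakd_{L/K_8}) \leq 30^{16}/D_{K_8}^2$ using ray class field computations in \emph{Magma} or \emph{Pari}. This means running over moduli $\mathfrak{m}$ supported on the primes ramified in $K_8$ (with wild primes taken to a sufficient local power) together with small primes required by the relative-discriminant bound, enumerating the order-$2$ characters of each ray class group, and building a defining polynomial for the resulting $L$. For each such $L$, I compute $\Gal(L^g/\Q)$ using \emph{Magma}'s Galois-group machinery and retain only those isomorphic to $16T1506$; in particular the trivial composita $K_8 \cdot F$ with $F/\Q$ quadratic yield $\PSL_2(7) \times C_2 = 16T714$ (as in the first corollary) and are discarded.

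Assembling the surviving $L$ with $D_L \leq 30^{16}$ produces the claimed list of five. The equality $2^{16}\cdot 3^8\cdot 53^8 = 2^4 \cdot (2^6\cdot 3^4\cdot 53^4)^2$ shows that both minimum-discriminant fields are quadratic extensions of the second field on Table~\ref{fulltable} with relative discriminant of norm $2^4$, and the twist $f(x^2) \leftrightarrow f(-3x^2)$ pairs them as two such extensions differing by a twist at the prime $3$. The principal obstacle is the group-theoretic step of pinning down the unique $\PSL_2(7)$-octic subfield of any $16T1506$ field: once this reduction to Theorem~\ref{octicthm1} is secured, the remaining task is a routine ray-class-field enumeration and Galois-group verification over each of the $25$ base fields.
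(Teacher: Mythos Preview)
Your proposal is correct and follows essentially the same approach as the paper: reduce to the $25$ octics of Theorem~\ref{octicthm1} and enumerate their quadratic overfields via class field theory, keeping those with Galois group $16T1506$ and discriminant $\leq 30^{16}$. The paper is terser---it treats this corollary in the same ``similar analysis for all twenty-five base fields'' spirit as the two preceding corollaries and simply reports the outcome---whereas you spell out the group-theoretic reason every $16T1506$ field sits over a $\PSL_2(7)$ octic; that extra justification is welcome and consistent with how the paper uses the Hilbert class field example \eqref{fifthfield}.
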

\noindent The next smallest discriminant, $2^{12} 3^{12} 53^8 \approx 27.91^{16}$, also arises
twice, and so the field defined by \eqref{fifthfield} is in fact last on the list.

\section{Results for $\PSL_2(5)$ and $\PGL_2(5)$}
\label{pgl2}

\subsection{Complete lists of $\PSL_2(5)$ and $\PGL_2(5)$ sextics} 
Our second illustration of the method of mixed degree searches comes 
from the sextic groups $\PSL_2(5) = 6T12$ and $\PGL_2(5)=6T14$. 
The fields with smallest root discriminant were obtained
by a direct sextic search in \cite{FP} and \cite{FPDH}. 
These root discriminants are $2^1 67^{1/3} \approx 8.12$ and
$2^1 3^{1/3} 7^{1/2} \approx 11.01$.  

In this second illustration, the smaller degree is $5$, via
the isomorphisms $A_5 \cong \PSL_2(5)$ and $S_5 \cong \PGL_2(5)$. 
Table~\ref{psltable} gives an analysis of tame ramification, with the bottom
three lines being relevant to $S_5 \cong \PGL_2(5)$ only.  As before, each partition
$\lambda_n$ determines the corresponding discriminant exponent
$c_n$ via formula~\eqref{tamec}.
\begin{table}[htb]
\caption{\label{psltable} Tame ramification data for $A_5 \cong \PSL_2(5)$ and $S_5 \cong \PGL_2(5)$ in degrees
$5$ and $6$.}
\[
\begin{array}{l|l||c|c}
\lambda_5 & \lambda_6 & c_5 & c_6 \\ \hline
5 & 51 & 4 & 4 \\
311 & 33 & 2 & 4 \\
221 & 2211 & 2 & 2 \\
\hline
41 & 411 & 3 & 3 \\
32 & 6 & 3 & 5 \\
2111 & 222 & 1 & 3
\end{array}
\]
\end{table}
The behavior of wild ramification in this 5-to-6 context is analogous
to the case of $\gseptic$ and $\g$  discussed earlier
and so we omit the detailed analysis.   From \cite{jr-tw}, one knows that the bounds suggested 
by the tame table hold in general: $D_5 \leq D_6 \leq D_5^2$ for 
$A_5 \cong \PSL_2(5)$ and $|D_5| \leq |D_6| \leq |D_5|^3$ for
$S_5 \cong \PGL_2(5)$.

As pointed out in \cite{jr-tw}, the first bound $|D_5| \leq |D_6|$
implies that a complete table of quintic fields up through discriminant 
bound $B$ determines the corresponding complete table of sextic
fields up through $B$.  In contrast to the situation for
$\gseptic \cong \g$, this observation gives non-empty lists
in the larger degree.    Taking $B=12\z000\z000$ from our extension \cite{jr-global-database} of 
\cite{SPD}, one gets $78$ $\PSL_2(5)$ sextics and $34$ $\PGL_2(5)$ sextics
with root discriminant at most $B^{1/6} \approx 15.13$.

On Table~\ref{psltable} there are three instances when $c_5<c_6$.  
Accordingly, we can substantially reduce the quintic search space
by targeting.  The result for root discriminant $\delta_6 \leq 35$ is
as follows.
\begin{thm} 
\label{sexticthm}
Among sextic fields with absolute discriminant $\leq 35^6$, exactly $2361$ 
have Galois group $\PSL_2(5)$ and $3454$ have Galois group $\PGL_2(5)$.
\end{thm}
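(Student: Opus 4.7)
The plan is to apply the mixed degree targeted Hunter search of Section~\ref{mixed} to the pair $(n,m)=(5,6)$. Since $5$ is prime, the primitivity hypothesis of a standard Hunter search is automatic, and the bound $|D_5|\leq|D_6|$ recalled from \cite{jr-tw} implies that every sextic $\PSL_2(5)$ or $\PGL_2(5)$ field with discriminant at most $35^6$ has a quintic sibling in the same range. Targeting based on the gaps $c_5<c_6$ visible in Table~\ref{psltable} is what makes this practical.

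First I would assemble the full list of local targets. Tame targets at primes $p\nmid 60$ are read off Table~\ref{psltable} directly. For the wild primes $p\in\{2,3,5\}$ I would build the analog of Table~\ref{wildtable}: run through the subgroups of $A_5$ and of $S_5$ that can serve as inertia, namely extensions of a cyclic prime-to-$p$ group by a nontrivial $p$-group, and for each identify the compatible decomposition groups, the quintic and sextic orbit partitions, and the candidate pairs $(c_5,c_6)$, using the local field databases \cite{jr-local-database,lmfdb} to pin down the allowed discriminant exponents. The prime $5$ needs particular care since it divides $|A_5|$ only once yet is entangled with the degree itself, so its wild extensions must be enumerated directly from the local database.

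Next I would combine the per-prime targets across $p$ via the Chinese remainder theorem, keeping only combinations whose product $\prod_p p^{c_6}$ is at most $35^6$. Each surviving combination converts to congruences on a quintic defining polynomial, and a targeted Hunter search returns all candidates. For each quintic field the sextic sibling is computed as a resolvent via \emph{Magma} \cite{magma}; after deduplication and Galois group identification as $6T12$ or $6T14$, the counts are tallied. Some further savings in the spirit of \Se\ref{wild} are natural: the lemma of \Se\ref{hunter} applies to the $A_5\cong\PSL_2(5)$ case and forces totally ramified tame primes to be quadratic residues modulo $5$, eliminating roughly half of the tame targets in that half of the problem.

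The main obstacle is computational rather than conceptual. As in the $\g$ computation, the expensive subsearches are those in which $c_5=c_6$ at every ramifying prime, since there targeting offers no discriminant savings over a plain quintic Hunter search. The rigor of the stated counts $2361$ and $3454$ then rests on the bookkeeping steps of verifying that no compatible combination of targets has been omitted and that the wild ramification analysis is exhaustive at each of $p=2$, $3$, and $5$.
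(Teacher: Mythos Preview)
Your proposal is correct and follows essentially the same approach as the paper: a mixed degree targeted Hunter search in degree~$5$ using the tame data of Table~\ref{psltable}, supplemented by a wild analysis at $2$, $3$, $5$ that the paper explicitly omits as ``analogous,'' followed by passage to sextic resolvents. Two small cross-reference slips worth fixing: the evenness lemma you invoke lives in \Se3.3, not \Se\ref{hunter}, and the ``further savings'' subsection is \Se3.3 rather than \Se\ref{wild}.
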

\noindent Lists of fields can be retrieved by searching the website for \cite{jr-global-database}.

In parallel with the figure for our first case, Figure~\ref{pict5and6}
illustrates our second case.   The regularity near the bottom boundary
$\delta_6 = \delta_5^{5/6}$ is easily explained, as follows.   For any pair $(K_5,K_6)$,
the ratio $D_6/D_5$ is always a perfect square $r^2$.   The pair gives rise 
to a point on the curve $\delta_6 = r^{1/3} \delta_5^{5/6}$.   
In the $S_5 \cong \PGL_2(5)$ case, the curves corresponding to 
$r=1,2,\dots,14,15$ are all clearly visible.  The first ``missing curve,''
clearly visible as a gap,  
corresponds to $r=16=2^4$.  This curve is missing
 because none of the $2$-adic possibilities for $(c_5,c_6)$ satisfy $c_6-c_5=8$.
In the $A_5 \cong \PSL_2(5)$ case, there are fewer $2$-adic possibilities
and the first four visible gaps correspond to $r=4$, $8$, $12$, and $16$.

\begin{figure}[htb]
\begin{center}
\includegraphics[width=4.8in]{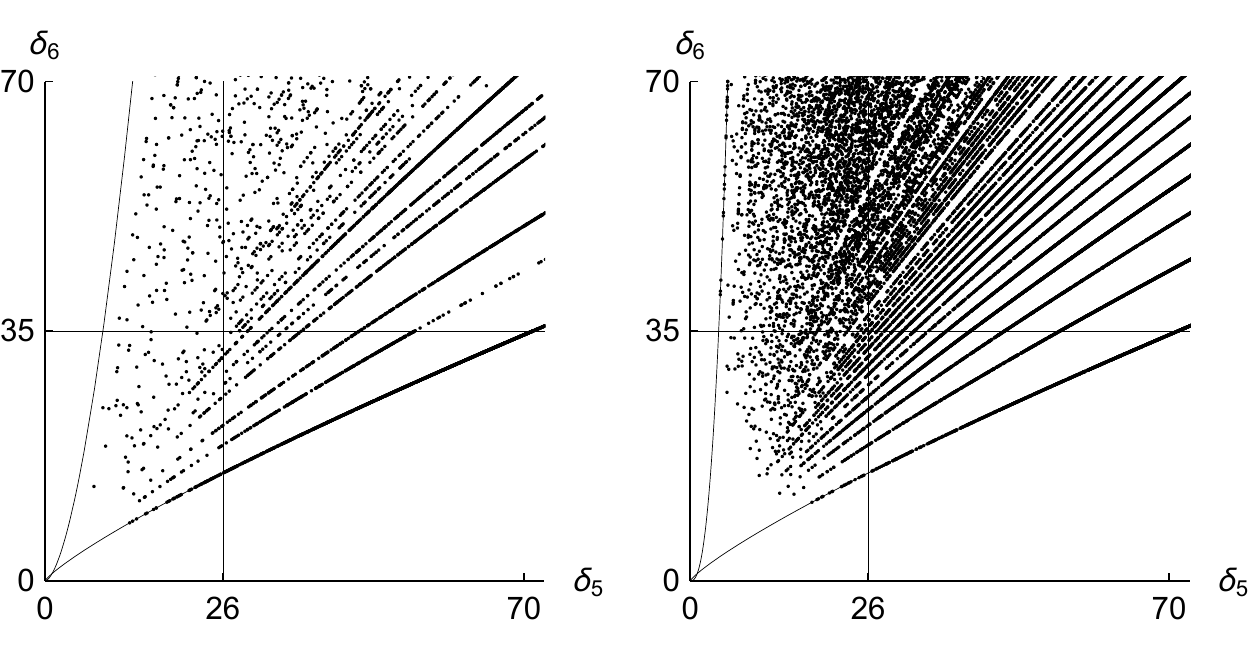} 
\end{center}
\caption{\label{pict5and6} Root discriminant pairs $(\delta_5,\delta_6)$ 
associated to $A_5 \cong \PSL_2(5)$ (left) and $S_5 \cong \PGL_2(5)$ (right), 
including all pairs in the window with $\delta_5 \leq 26$ from \cite{jr-global-database}, and all pairs 
with $\delta_6 \leq 35$ from Theorem~\ref{sexticthm}.}
\end{figure}

\subsection{Connections with expected mass formulas}  
Let $NF(G,x)$ denote the set of isomorphism classes 
of number fields $K$ with $\Gal(K)=G$ and root discriminant 
at most $x$.  From the quintic search in \cite{jr-global-database}, one knows 
\begin{align}
\label{bigquintic}
|NF(A_5,26)| &= 539, &  |NF(S_5,26)| & = 726862.
\end{align}
 The ratio
$539/726862 \approx 0.00074$ is an instance of the familiar informal principle
``$S_n$ fields are common
but $A_n$ fields are rare."   In this light, the much larger 
ratio $2361/3454 \approx 0.68356$ from Theorem~\ref{sexticthm} is surprising.  

However, the fact that $NF(\PSL_2(5),35)$ and $NF(\PGL_2(5),35)$ have
such similar sizes can be explained as follows.   For $g \in S_n$,
let $o_g$ be its number of cycles and let $\epsilon_g = n-o_g$.  
For a transitive permutation group
$G \subseteq S_n$ and a conjugacy class $C \subseteq G$, 
define $\epsilon_C$ to be $\epsilon_g$ for any $g \in C$.  
Define $a_G$ to be the minimum of the $\epsilon_C$ and
$b_G$ to be the number of classes obtaining this minimum.
Then Malle conjectures an asymptotic growth rate
\[
|NF(G,x)| \sim c_G x^{n/a_G} \log(x)^{b_G-1},
\]
for some constant $c_G$ \cite{Ma}.   For $A_n$, the two minimizing classes have
cycle types $2^2 1^{n-4}$ and $3 1^{n-3}$, while for 
$S_n$ the unique minimizing class is $21^{n-2}$.  
Thus, consistent with numerical data like \eqref{bigquintic},
one expects very different growth rates:
\begin{align}
|NF(A_n,x)| & \sim c_{A_n} x^{n/2} \log x,& 
|NF(S_n,x)| & \sim c_{S_n} x^n.
\end{align}
For $n \leq 5$, this growth rate is proved for $S_n$ with identified constants, 
and it is known that the growth for $A_n$ is indeed slower; see  \cite{Bh5} for 
$S_5$ and \cite{BCT} for $A_5$.  
 
But now, for $G \in \{\PSL_2(5),\PGL_2(5)\}$, the unique
minimizing class is $2^2 1^2$.  Thus, consistent with 
Theorem~\ref{sexticthm}, these two groups should have the
same asymptotics up to a constant, both having the form 
$|NF(G,x)| \sim c_G x^{3}$.   There 
are similar comparisons associated to 
our two other theorems.   For the group of Theorem~\ref{octicthm1},
one should have
$|NF(\PSL_2(7),x)| \sim c_{\PSL_2(7)} x^2 \log x$ from 
$2^4$ and $3^2 1^2$.  This growth rate is substantially
less than the expected $|NF(\PGL_2(7),x)| \sim c_{\PGL_2(7)} x^{8/3}$
from $2^3 1^2$.    Indeed, there are eighteen
known $\PGL_2(7)$ octics with root discriminant $\leq 21$ \cite{jr-global-database}.  
For the groups of Theorem~\ref{septicthm}, $|NF(\gseptic,x)| \sim c_{\gseptic} x^{7/2}$ should 
grow more slowly than $|NF(A_7,x)| \sim c_{A_7} x^{7/2} \log x$.  
The search underlying Theorem~\ref{septicthm} is complete
through discriminant $12^7$.  It can be expected
to be near-complete for $x$ substantially past $12^7$, 
and these extra fields do indicate a general increase in $|NF(A_7,x)|/|NF(\gseptic,x)|$.

\subsection{Complete lists of Artin representations}
Theorems~\ref{octicthm1} and \ref{sexticthm} can each be viewed from a 
different perspective.  Computing octic
$\g$ fields is essentially the same as computing Artin representations
for the irreducible degree $7$ character of $\gseptic$; computing sextic
$\PSL_2(5)$ and $\PGL_2(5)$ fields is equivalent to computing Artin
representations for certain irreducible degree $5$ characters of $A_5$ and
$S_5$.  In each case, discriminants match conductors.

By targeting differently, one could get  lists of 
other Artin representations belonging to these and other small
groups, which are complete up through some conductor cutoff.   For example, $\g$ and $\PSL_2(5)$ 
each have two three-dimensional
representations.    These representations are particularly 
interesting because of their low degree, which means that the
associated $L$-functions are relatively accessible to analytic computations.  
We plan to draw up these lists elsewhere,
as they fit into the program of \cite{lmfdb} of systematically
cataloging $L$-functions.

\section{Results for $\gseptic$, $A_7$, and related groups}
\label{standard}

\subsection{Extending the known lists for $\gseptic$ and $A_7$}
The first pair of fields for $\gseptic$ and first
field for $A_7$
were determined by Kl\"uners and Malle in \cite{kluner-malle}.  
We extend the complete list of such fields by employing the same technique as
\cite{kluner-malle}: a standard
Hunter search modified to select polynomials with Galois group
contained in $A_7$.  The latter condition comes into play as the first
step when inspecting a polynomial in the search to see if it is
suitable.  Testing if the polynomial discriminant is a square can be
done very quickly and filters out all the polynomials 
with odd Galois group.  

\begin{thm}
\label{septicthm}
  Among septic fields with discriminant $\leq 12^7$, exactly
  $46$ have Galois group $\gseptic$ and 
  $17$ have Galois group $A_7$.
\end{thm}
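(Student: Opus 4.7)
The plan is to carry out a standard Hunter search in degree $7$ with discriminant bound $B=12^7$, as sketched in Section~\ref{hunter}, and then postprocess the output to isolate those fields whose Galois group is $\gseptic$ or $A_7$. Since $7$ is prime, the primitivity hypothesis of the Hunter framework is automatic, so every septic number field $K$ with $|D_K|\leq 12^7$ is represented (up to isomorphism) by at least one defining polynomial $f(x)\in\Z[x]$ among those enumerated. The first filter is the standard trace-normalization: after translating a root, one may assume the generator $\alpha$ has trace $0$ and then Hunter's inequality bounds $\sum|\alpha_i|^2$, which in turn bounds the elementary symmetric functions, giving explicit coefficient bounds on $f$.

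The second step is the evenness filter, which is the only modification to a plain Hunter search. Both $\gseptic$ and $A_7$ are contained in $A_7$, so a necessary condition for the Galois group to be one of the two target groups is that $\mathrm{Disc}(f)$ is a rational square. This is a very cheap test on the integer $\mathrm{Disc}(f)$ and discards almost all candidates up front. For each polynomial that survives, I would (a) verify irreducibility, (b) confirm that its field discriminant (not just polynomial discriminant) is at most $12^7$, and (c) determine the Galois group via resolvents or via cycle types gathered from factorizations modulo many small unramified primes. The transitive subgroups of $A_7$ are few, so distinguishing $\gseptic=7T5$, $A_7=7T6$, and the various smaller even subgroups (including $C_7$, $D_7$, $C_7\sd C_3$) is a short enumeration; cycle types alone already separate $\gseptic$ from $A_7$ because $A_7$ contains $5$-cycles and $\gseptic$ does not.

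After Galois-group assignment, I would deduplicate: several polynomials in the search may define the same number field, so the surviving pairs $(f, K)$ are grouped by isomorphism of $K$, for example by comparing LLL-reduced polynomials of the field, or by checking equality of the ordered invariants $(D_K, \textrm{signature}, \text{other Galois-invariants})$ combined with an explicit isomorphism test. The final tallies should give $46$ and $17$ respectively, confirming Theorem~\ref{septicthm}.

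The main obstacle is runtime, not correctness. A standard Hunter search at this bound in degree~$7$ enumerates a very large box of candidate coefficient vectors, and without the refined targeting used for $\g$ in Section~\ref{mixed}, one truly has to sweep the whole box; only the evenness test cuts it down. This is exactly the point the paper is emphasizing when it writes ``the long runtime of this search makes clear the importance of targeting for Theorem~\ref{octicthm1}.'' Beyond time, a subtler issue is bookkeeping at the boundary between polynomial discriminant and field discriminant: one must correctly remove the square factor coming from a non-maximal order so as not to wrongly discard a field whose polynomial discriminant exceeds $12^7$ while its field discriminant does not. Handling this, together with a rigorous argument that the Hunter coefficient bounds actually capture every field up to $|D_K|\leq 12^7$, is the only non-routine mathematical ingredient; the rest is a bounded, finite, and verifiable computation.
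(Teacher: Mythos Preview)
Your proposal is correct and follows essentially the same approach as the paper: a standard Hunter search in degree~$7$ with bound $12^7$, with the evenness filter $\mathrm{Disc}(f)\in\Q^{\times 2}$ applied as the first cheap test on each candidate polynomial, followed by Galois-group identification and deduplication. The paper adds only the empirical datum that the run examined about $10^{12}$ polynomials over roughly six and a half CPU-months, confirming your remark that runtime, not correctness, is the bottleneck.
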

\noindent Carrying out the computation up to discriminant $12^7$ took six and a
half months of CPU time and inspected roughly $10^{12}$ polynomials.
As in other cases, defining polynomials and other information for these fields can be
obtained from the website for \cite{jr-global-database}.

The computation establishing Theorem~\ref{septicthm} sheds light on 
the list of $\g$ fields established by Theorem~\ref{octicthm1}.
  The runtime of a Hunter search in degree $n$
with discriminant bound $B$ is proportional to $B^{(n+2)/4}$ \cite{jr-timing}.  Using
this, our estimate for the runtime for confirming the first $\g$ octic field 
by computing septic fields without targeting is approximately
$(21^8/12^7)^{9/4} 6.5/12 \approx 3$ million CPU-years.
To get Theorem~\ref{octicthm1}'s complete list through discriminant $30^8$ would then take $(30^8/12^7)^{9/4} 6.5/12 \approx 2$ billion CPU-years.

\subsection{The first $\gaffine$ field}
As observed in \cite{kluner-malle}, a sufficiently long complete list of septic $\gseptic$
fields can be used to determine the first octic $\gaffine$ field.
The splitting field of a $\gaffine$ polynomial contains (up to isomorphism),
two subfields $K_{7a}$ and $K_{7b}$ of degree $7$, two $K_{14a}$ 
and $K_{14b}$ of degree $14$ and Galois group $14T34$, and two
subfields $K_{8a}$ and $K_{8b}$ of degree $8$ and Galois group
$8T48$.  One of the septic fields is contained in both $K_{14a}$ and
$K_{14b}$, and the other is contained in neither.  

Since the septic fields are arithmetically equivalent, they have the 
same discriminants, i.e., $D_{7a}=D_{7b}$.  The other indices can be
adjusted so that 
\begin{equation} \label{disceq}
D_{7x}D_{8x}=D_{14x} \quad \text{ for } x\in\{a,b\}.
\end{equation}
This comes from a character relation on the relevant permutation
characters.   Because of the asymmetry in the field inclusions
described above, the fact that $\gseptic$ fields come in
arithmetically equivalent pairs does not play a role in our computations,
and so we have forty-six septic ground fields to consider
separately.   Accordingly, we drop $x$ from notation, always taking
the correct octic resolvent so that \eqref{disceq} holds.   

Of the forty-six septic $\gseptic$ fields with discriminant $\leq 12^7$, only
one has non-trivial narrow class group. 
This field has narrow class
number two, and is $K_{7} = \Q[x]/f(x)$ with 
\begin{equation}
\label{deg7}
f(x) = x^7 - x^6 - x^5 - 2 x^4 - 7 x^3 - x^2 + 3 x + 1.
\end{equation}
The unramified quadratic extension turns out to be simply $K_{14} = \Q[x]/f(-x^2)$,
which has Galois group $14T34$.   So $K_{14}$ and $K_{7}$ both
have root discriminant $5717^{2/7} \approx 11.84$.  By \eqref{disceq},
the sibling $K_{8}$ of $K_{14}$ has the even smaller root discriminant 
$5717^{1/4} \approx 8.70$.  

In general, given all septic $\gseptic$ fields up to some discriminant bound $B$, 
one can get all $14T34$ fields up to the discriminant bound $B^2$ 
via quadratic extensions.    If $\frakd$ is the
relative discriminant of $K_{14}/K_7$, then $D_{14}=D_7^2 N_{K_{7}/\Q}(\frakd)$, and so
$N_{K_{7}/\Q}(\frakd)$ must be at most $B^2/D^2_7$.   In the same way, the stronger bound 
$N_{K_{7}/\Q}(\frakd) \leq B/D_7$ is necessary and sufficient for 
 the resolvent $8T48$ field to have discriminant $\leq B$.
 Taking $B=12^7$ now, the quotient $B/D_7$ decreases
from $12^7/(13^2 109^2) \approx 17.85$ for the first two ground fields $K_7$
 to $12^7/(2^6 743^2) \approx 1.01$ for the last two ground fields.
For the first twenty-six ground fields, computation shows
that there are no $14T34$ overfields satisfying $N_{K_{7}/\Q}(\frakd) \leq B^2/D^2_7$. For the last twenty fields,
already $B/D_7<\sqrt{2}$ and so the lack of overfields, except for the unramified
one above, follows from the narrow class numbers being $1$.   Hence we have
the following corollary of Theorem~\ref{septicthm}.

\begin{cor}
\label{octicthm2}
  The field $\Q[x]/f(-x^2)$ from \eqref{deg7} is the only degree fourteen
  field with Galois group $14T34$ and discriminant $\leq 12^{14}$.  
  Its sibling, with defining polynomial
  \[ x^8 - 4x^7 + 8x^6 - 9x^5 + 7x^4 - 4x^3 + 2x^2 + 1,\]
  is the only the octic field with Galois group $\gaffine = 8T48$ and discriminant
  $\leq 12^7$.  These two fields have discriminants $5717^4 \approx 11.84^{14}$ and 
  $5717^2 \approx  11.84^7 \approx 8.70^{8}$ respectively.  
\end{cor}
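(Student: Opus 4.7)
The plan is to use Theorem~\ref{septicthm}, which supplies the 46 septic $\gseptic$ fields $K_7$ with $D_7 \leq 12^7$, and to enumerate all quadratic extensions $K_{14}/K_7$ lying below the discriminant cutoff. The governing identity is \eqref{disceq}: writing $\frakd$ for the relative discriminant of $K_{14}/K_7$, one has $D_{14} = D_7^2\, N_{K_7/\Q}(\frakd)$ and $D_8 = D_7\, N_{K_7/\Q}(\frakd)$. Any $14T34$ field of discriminant at most $12^{14}$ has a $\gseptic$ septic subfield of discriminant at most $12^7$, so every candidate $K_{14}$ arises as a quadratic extension of one of the 46 listed fields.

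I would split the 46 ground fields into two groups by comparing $D_7$ to $12^7/\sqrt{2}$. For the twenty fields with $D_7 > 12^7/\sqrt{2}$, the bound $D_{14}\leq 12^{14}$ forces $N_{K_7/\Q}(\frakd) \leq 12^{14}/D_7^2 < 2$, so $\frakd = (1)$ and only unramified quadratic extensions are possible. The number of such extensions equals the $2$-rank of the narrow class group of $K_7$. A narrow class group computation (via \emph{Magma} or \emph{Pari}) over these twenty fields should confirm that nineteen have trivial narrow class group; the exceptional one is $K_7 = \Q[x]/f(x)$ from \eqref{deg7}, with narrow class number $2$. The unique unramified quadratic extension is then identified with $\Q[x]/f(-x^2)$ by verifying irreducibility of $f(-x^2)$ and checking that its discriminant equals $D_7^2 = 5717^4$.

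For the remaining 26 fields with $D_7 \leq 12^7/\sqrt{2}$, I would invoke the class-field-theory routines in \emph{Magma} or \emph{Pari} to enumerate all quadratic extensions of $K_7$ whose relative discriminant has norm at most $12^{14}/D_7^2$, then discard those whose Galois closure over $\Q$ is not of type $\gaffine = 8T48$. The expected outcome is that no $14T34$ extensions satisfy the bound at all. To finish, I would apply the resolvent construction of Section~\ref{mixed} to the unique $K_{14}$ found above to produce the claimed octic polynomial $x^8 - 4x^7 + 8x^6 - 9x^5 + 7x^4 - 4x^3 + 2x^2 + 1$, and verify $D_8 = D_{14}/D_7 = D_7 = 5717^2$ along with $\Gal(K_8) = 8T48$.

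The main obstacle is the class-field-theory enumeration over the 26 smaller-discriminant septic fields, since in principle these could admit numerous quadratic extensions within the allowed range. However, the bound $N_{K_7/\Q}(\frakd) \leq 12^{14}/D_7^2$ keeps the search in each case manageable, so this step is finite and computationally tractable rather than conceptually difficult.
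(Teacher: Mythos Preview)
Your proposal is correct and follows essentially the same route as the paper: reduce via \eqref{disceq} to quadratic extensions of the forty-six septic $\gseptic$ fields from Theorem~\ref{septicthm}, split these into the twenty with $D_7 > 12^7/\sqrt{2}$ (where only unramified extensions survive, handled by narrow class numbers) and the twenty-six smaller ones (handled by a class-field-theory enumeration), and find the single $14T34$ field and its octic sibling. One small wording slip: the number of unramified quadratic extensions of $K_7$ is $2^r-1$, not $r$, where $r$ is the $2$-rank of the narrow class group, though this does not affect the argument since you are only testing whether it is zero.
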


\bibliographystyle{amsalpha}
\bibliography{jr}

\end{document}